\theoremstyle{plain}
\newtheorem{theorem}{Theorem}[section]
\newtheorem{lemma}[theorem]{Lemma}
\newtheorem{corollary}[theorem]{Corollary}
\newtheorem{claim}[theorem]{Claim}
\theoremstyle{definition}
\newtheorem{definition}[theorem]{Definition}
\newtheorem{example}[theorem]{Example}
\theoremstyle{remark}
\newtheorem{remark}[theorem]{Remark}
\newtheorem*{notation}{Notation}
\newcommand{\N}{\mathbb{N}}
\newcommand{\di}{dimension}
\newcommand{\floor}[1]{\left\lfloor #1 \right\rfloor}
\newcommand{\ceil}[1]{\left\lceil #1 \right\rceil}
\newcommand{\prs}[1]{\left( #1 \right)}
\newcommand{\crb}[1]{\left\{ #1 \right\}}
\tikzset{
pattern size/.store in=\mcSize, 
pattern size = 5pt,
pattern thickness/.store in=\mcThickness, 
pattern thickness = 0.3pt,
pattern radius/.store in=\mcRadius, 
pattern radius = 1pt}
\pgfpoint{\mcSize}{\mcSize}}
\pgfpoint{\mcSize}{\mcSize}}
\tikzset{
pattern size/.store in=\mcSize, 
pattern size = 5pt,
pattern thickness/.store in=\mcThickness, 
pattern thickness = 0.3pt,
pattern radius/.store in=\mcRadius, 
pattern radius = 1pt}
\pgfpoint{\mcSize}{\mcSize}}
\tikzset{every picture/.style={line width=0.75pt}}
\title{Vertex-Minimal Triangulation of Complexes with Homology}
\author{Jon V. Kogan\footnote{Department of Mathematics, Hebrew University, Jerusalem 91904, Israel. e-mail:
jonatan.kogan$@$mail.huji.ac.il. Partially supported by ERC grant 3012006831.}}
\date{\vspace{-5ex}}
\begin{document}
\maketitle
\begin{abstract}
For a given pair of numbers $(d,k)$, we establish the minimal number of vertices in pure $d$-\di al simplicial complexes with non-trivial homology in \di \ $k$. Furthermore, we solve the problem under the additional constraint of strong connectivity with respect to any intermediate \di. 
\end{abstract}
\section{Introduction}\label{intro}
Simplicial complexes are a natural generalization of graphs and have been studied extensively in recent decades- whether as clique complexes of graphs, triangulations of manifolds, or random models for topological spaces.
Given the strong interest in extremal graph theory, it is natural to ask extremal questions in the context of simplicial complexes, specifically those that involve the number of faces within these complexes.

A central result in this area is the Kruskal–Katona theorem (see \cite{Kruskal63}), which provides a description of the possible face-vectors of a simplicial complex.
Subsequently, Björner and Kalai related matters to topology in  \cite{bjornerkalai88}, by showing the relationship between the face vector and Betti vector of a simplicial complex.
\par
In this paper, we alter the question in \cite{bjornerkalai88} by adding 
geometric requirements that the simplicial complex must satisfy.
For instance, it is plain to see that the smallest number of vertices of a complex with non-trivial $k$-homology is $k+2$ (in $\partial(\Delta^{k+1})$), and for a pure $d$-\di al complex the minimum is $d+1$.
Combining these requirements turns out to result in a more interesting problem.

In \Cref{pure} we prove:
\begin{theorem}\label{thmPure}
    Let $0\le k\le d$. Any pure $d$-dimensional simplicial complex with nontrivial $H_k$ has at least $\ceil{\frac{(d+1)(k+2)}{k+1}}$ vertices, and this bound is tight.
\end{theorem}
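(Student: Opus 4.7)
The plan is to treat tightness and the lower bound separately. For tightness I give an explicit construction, and for the lower bound I translate the hypothesis $\tilde H_k(X) \ne 0$ into a Helly-type statement about the facets of $X$ via Björner's nerve theorem, from which the vertex count bound follows by counting complements.

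For tightness, set $n = \ceil{(d+1)(k+2)/(k+1)}$. The defining inequality is equivalent to $(k+2)(n-d-1) \ge n$, so a partition $\sqb{n} = V_1 \sqcup \dots \sqcup V_{k+2}$ with $\abs{V_i} \le n - d - 1$ for every $i$ exists. Let $X$ be the pure $d$-dimensional complex whose facets are precisely the $(d+1)$-subsets of $\sqb{n}$ that miss at least one block. The block-quotient $\sqb{n} \to \sqb{k+2}$ extends simplicially to a map $\pi \colon X \to \partial \Delta^{k+1}$ because every facet maps to a proper subset of $\sqb{k+2}$. Choosing a transversal $v_j \in V_j$, the chain $c := \partial \sqb{v_1, \dots, v_{k+2}}$ is a $k$-cycle in $X$ (each term misses exactly one block and so extends to a facet) and $\pi_{\ast} \sqb{c}$ is the fundamental class of $\partial \Delta^{k+1}$, so $\sqb{c} \ne 0$ in $H_k(X)$.

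For the lower bound, consider the cover of $X$ by the closed simplices $\overline{\tau}$ as $\tau$ ranges over the facets of $X$. By purity this is indeed a cover, and any intersection $\overline{\tau_{i_1}} \cap \dots \cap \overline{\tau_{i_r}}$ is either empty or the simplex on $\bigcap_j V(\tau_{i_j})$, hence contractible. Björner's nerve theorem yields $X \simeq \mathcal{N}$, where the vertex set of $\mathcal{N}$ is the set of facets of $X$ and $\crb{\tau_{i_1}, \dots, \tau_{i_r}}$ is a face of $\mathcal{N}$ iff $\bigcap_j V(\tau_{i_j}) \ne \emptyset$. If every $k+2$ facets of $X$ shared a vertex, monotonicity of intersection would force $\mathcal{N}$ to contain the complete $(k+1)$-skeleton on its vertex set, giving $\tilde H_k(\mathcal{N}) = 0$ and contradicting $\tilde H_k(X) \cong \tilde H_k(\mathcal{N}) \ne 0$. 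Hence some $k+2$ facets $\tau_1, \dots, \tau_{k+2}$ satisfy $\bigcap_i V(\tau_i) = \emptyset$; the complements $\sqb{n} \setminus V(\tau_i)$, each of size $n - d - 1$, then cover $\sqb{n}$, giving $(k+2)(n-d-1) \ge n$, which rearranges to $n \ge \ceil{(d+1)(k+2)/(k+1)}$.

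The main conceptual step will be the nerve-theoretic reduction from the topological condition $\tilde H_k(X) \ne 0$ to the purely combinatorial condition that some $k+2$ facets have empty common vertex set; once this reduction is in place, both the construction and the vertex count are essentially forced. The remaining verifications---that the partition for the construction exists with the prescribed block-size cap and that the specified $k$-cycle genuinely lies in $X$---are routine.
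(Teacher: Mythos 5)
Your proof is correct and follows essentially the same route as the paper: the lower bound is obtained by applying the nerve theorem to the cover by facets to produce $k+2$ facets with empty common intersection and then counting (your complement-covering count is the paper's pigeonhole argument in contrapositive form), and your tightness example is a mild variant of the paper's $MH_{d,k}$, taking \emph{all} $(d+1)$-sets missing a block rather than $k+2$ chosen facets, and detecting homology via the block-quotient map onto $\partial\Delta^{k+1}$ instead of identifying the nerve directly.
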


Additionally, we explore strong connectivity (defined in \Cref{defStrongC}), an equivalence relation on simplexes, whose components are crucial for understanding the cohomological structure of the entire complex, as well as cohomology operations such as the cup product and Steenrod operations (see \Cref{lemCupSteenConc}).
We prove the following result:

\begin{theorem}\label{thmStr}
    Let $1\le k\le d$. Any strongly-connected $d$-dimensional complex with non-trivial $H_k$ has at least $d+1+\ceil{\frac{d}{k}}=\ceil{\frac{d(k+1)}{k}+1}$ vertices, and this bound is tight.
\end{theorem}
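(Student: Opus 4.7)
The plan is a shelling-plus-Mayer--Vietoris analysis, with the key observation that homology can change only at ``closing'' $d$-simplex additions, combined with a vertex-counting argument at such a step; tightness then follows from a cyclic construction generalising the minimum Möbius band.

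\textbf{Shelling and key observation.} Let $X$ be strongly-connected, pure $d$-dimensional, with $H_k(X)\neq 0$ and $n$ vertices. Using a spanning tree of the dual graph of $d$-faces, enumerate $\sigma_1,\ldots,\sigma_s$ so each $\sigma_i$ ($i\geq 2$) shares a $(d-1)$-face with $X_{i-1}:=\bigcup_{j<i}\sigma_j$. Setting $\tau_i:=X_{i-1}\cap\sigma_i\subseteq\partial\sigma_i$, Mayer--Vietoris for $X_i=X_{i-1}\cup\sigma_i$ with $\sigma_i$ contractible reads
\[
\tilde H_k(\tau_i)\to\tilde H_k(X_{i-1})\to\tilde H_k(X_i)\to\tilde H_{k-1}(\tau_i)\to\tilde H_{k-1}(X_{i-1}).
\]
The key observation is that if $\sigma_i$ introduces a new vertex $v^*$, then $\tau_i$ must be the $(d-1)$-face of $\sigma_i$ opposite $v^*$ (no face of $X_{i-1}$ inside $\sigma_i$ can contain $v^*$, and $\tau_i$ contains a $(d-1)$-face by strong connectivity), hence is a simplex, hence contractible; so $\tilde H_k(X_i)\cong\tilde H_k(X_{i-1})$, and $H_k$ can change only at ``closing'' steps where $\sigma_i$ adds no new vertex.

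\textbf{Vertex-counting at a closing step.} Let $i^*$ be the smallest index where a new class in $H_k(X_{i^*})$ survives all the way to $H_k(X)$. By the above, $\sigma_{i^*}$ is closing, so $V(\sigma_{i^*})\subseteq V(X_{i^*-1})$. The new class corresponds to a nonzero element of $\ker\bigl(\tilde H_{k-1}(\tau_{i^*})\to\tilde H_{k-1}(X_{i^*-1})\bigr)$; a representative $(k-1)$-cycle $\zeta$ thus bounds via some $k$-chain $c$ in $X_{i^*-1}$. If $c$ were supported on $V(\sigma_{i^*})$ it would lie in $\tau_{i^*}$, contradicting $[\zeta]\neq 0$, so $c$ must use vertices outside $V(\sigma_{i^*})$. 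I would then push this to the sharp claim that $c$ requires at least $\ceil{d/k}$ outside vertices, by analyzing the $d$-simplices of $X_{i^*-1}$ carrying $c$: each such $d$-simplex meets $\sigma_{i^*}$ in a face that must lie in $\tau_{i^*}$, which tightly restricts how many of its vertices can be in $V(\sigma_{i^*})$ without causing $\zeta$ to bound inside $\tau_{i^*}$. I checked this directly in the base case $k=1$ (where some vertex of $\sigma_{i^*}$ is isolated in $\tau_{i^*}$, and the $d$-simplex of $X_{i^*-1}$ containing it must use all $d$ of its remaining vertices outside $V(\sigma_{i^*})$), and in $d=3,k=2$ (the $6$-vertex minimum, where two fresh extras are provably forced); the general pattern is that each successive $d$-simplex along $c$ can contribute at most $k$ genuinely new vertices to the filling, so $\ceil{d/k}$ such extensions are required. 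Adding these to the $d+1$ vertices of $\sigma_{i^*}$ gives $n\geq d+1+\ceil{d/k}$.

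\textbf{Tightness.} For the matching construction, take $n=d+1+\ceil{d/k}$ cyclically labelled vertices $0,1,\ldots,n-1$ and the $n$ $d$-simplices $\sigma_i:=\{i,i+1,\ldots,i+d\}\pmod n$. Consecutive simplices share a $(d-1)$-face, so the complex is strongly-connected; for $d=2,k=1$ this recovers the $5$-vertex triangulation of the Möbius band, and in general the $\mathbb{Z}/n$-orbit of a fixed $k$-simplex yields a $k$-cycle whose class is nontrivial in $H_k$, verifiable by an explicit boundary computation exploiting the cyclic action.

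\textbf{Main obstacle.} The crux is the quantitative vertex-count: strengthening ``$c$ exits $V(\sigma_{i^*})$'' to ``$c$ uses at least $\ceil{d/k}$ outside vertices'' is the combinatorial heart of the proof, requiring a careful case analysis of how the purity constraint forces fresh $d$-simplices at every $k$ steps of the filling. I expect this to draw on the interaction between strong connectivity and cohomology operations captured in \Cref{lemCupSteenConc}, which encodes precisely the higher-dimensional filling obstructions that block small $c$.
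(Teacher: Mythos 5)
There are two genuine problems here, one in each half of the argument. Your reduction to a single ``closing'' step is sound and parallels the paper's setup (the paper runs a growth process on a face-minimal counterexample, so that Mayer--Vietoris at the last attachment $C=C_{n-1}\cup_D\Delta^d$ yields a class in $\tilde H_{k-1}(D)$ that bounds in $C_{n-1}$). But the quantitative step --- upgrading ``the filling chain $c$ leaves $V(\sigma_{i^*})$'' to ``$c$ uses at least $\ceil{d/k}$ outside vertices'' --- is exactly the content of the theorem, and your sketch does not prove it: the assertion that ``each successive $d$-simplex along $c$ can contribute at most $k$ genuinely new vertices'' is neither made precise nor justified, and checking $k=1$ and $(d,k)=(3,2)$ does not indicate how the induction on the filling would go. (Appealing to \Cref{lemCupSteenConc} cannot close this gap; that statement concerns cup products and Steenrod operations, not filling chains.) The paper avoids chain-level analysis entirely: since $\tilde H_{k-1}(D)\neq 0$, \Cref{lem:Nerve} gives $k+1$ maximal faces $D_0,\dots,D_k$ of $D$ with empty common intersection, lying in $d$-simplices $D_0',\dots,D_k'$ of $C_{n-1}$; strong connectivity puts a $(d-1)$-simplex $V\subset D_0$ with $|V|=d$, and a pigeonhole over $\sum_i|V\setminus D_i|\ge|V|$ forces some $|D_1\cap V|\le d(k-1)/k$, whence $|D_1'\setminus D|\ge d/k$ and these vertices are disjoint from the $d+1$ vertices of the attached simplex. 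That single pigeonhole is the combinatorial heart you are missing.

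The tightness construction also fails. Take $(d,k)=(4,2)$, so $n=d+1+\ceil{d/k}=7$, and consider your cyclic complex with facets $\sigma_i=\{i,\dots,i+4\}\pmod 7$. All finite intersections of facets are full simplices on their common vertex sets, so the nerve theorem applies; a set $S$ of facet-indices has a common vertex iff $S$ lies in some $5$-interval of $\Z/7$, i.e.\ iff $\Z/7\setminus S$ contains two consecutive elements. Hence the nerve is the combinatorial Alexander dual of the independence complex $\mathrm{Ind}(C_7)\simeq S^1$, and Alexander duality gives $\tilde H_i(\text{nerve})\cong\tilde H^{4-i}(S^1)$, which is nonzero only for $i=3$. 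So your $7$-vertex complex has $H_2=0$ (its homology sits in degree $3$), and the construction does not witness tightness for $(4,2)$; the coincidences at $(2,1)$, $(2,2)$, $(3,2)$ are accidents of small parameters. The paper's example $MS_{d,k}$ is structurally different: it partitions $d$ vertices into $k$ blocks $S_1,\dots,S_k$, adjoins a set $W$ of $\ceil{d/k}+1$ extra vertices, builds a contractible strongly connected complex whose induced subcomplex on $V\cup\{w_1\}$ has nerve $\partial(\Delta^k)$, and then caps with one more $d$-face along a sphere, so that Mayer--Vietoris produces the $H_k$ class; you would need to replace your cyclic complex with a construction of this kind.
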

This is a special case of a more general theorem on relative strong connectivity, proven in \Cref{secRel}:
\begin{theorem}\label{thmRel}
    Let $1\le k\le d$ and $X$ a pure $d$-\di al complex, strongly connected w.r.t \di \ $m>d+1 -\ceil{\frac{d+1}{k+1}}$, such that $H_k(X)\ne 0$. Then $X$ has at least $d+1+\ceil{\frac{m}{k}}$ vertices, and this bound is tight.
\end{theorem}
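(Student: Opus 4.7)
My approach to Theorem \ref{thmRel} is to reduce the lower bound to Theorem \ref{thmStr} through a link construction, and separately to exhibit a tight example.

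For the lower bound, I aim to establish the following claim: under the hypotheses of the theorem there exists a face $\tau\in X$ of dimension $d-m-1$ such that the link $\operatorname{lk}_X(\tau)$ is pure $m$-dimensional, (standardly) strongly connected, and satisfies $H_k(\operatorname{lk}_X(\tau))\ne 0$. Granting the claim, Theorem \ref{thmStr} applied to the link yields $|V(\operatorname{lk}_X(\tau))|\ge m+1+\lceil m/k\rceil$; combining with the disjointness $V(\tau)\cap V(\operatorname{lk}_X(\tau))=\emptyset$ inside $V(X)$ gives
\[
|V(X)|\ge (d-m)+\bigl(m+1+\lceil m/k\rceil\bigr)=d+1+\lceil m/k\rceil,
\]
the desired bound.

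The hard part of the argument will be proving the claim. Purity and strong connectivity of the link in the standard sense should follow by routine transfer of chains of top-dimensional faces of $X$ that pass through $\tau$, using the strong connectivity of $X$ w.r.t.\ dimension $m$ together with the identification of top-dimensional faces of $\operatorname{lk}_X(\tau)$ with cofaces of $\tau$ in $X$. The delicate issue is ensuring the non-triviality of $H_k$ in at least one link; I would attempt a Mayer--Vietoris or spectral-sequence argument based on the cover of $X$ by open stars of its $(d-m-1)$-faces, transferring a non-trivial class in $H_k(X)$ to the direct sum of the $H_k$'s of the links. The hypothesis $m>d+1-\lceil(d+1)/(k+1)\rceil$ is exactly what should prevent this transfer from degenerating, since it is precisely the regime in which the candidate bound $d+1+\lceil m/k\rceil$ strictly exceeds the bound $d+1+\lceil(d+1)/(k+1)\rceil$ obtained from Theorem \ref{thmPure}.

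For the matching construction, I would start from a tight example of Theorem \ref{thmPure} on $d+1+\lceil(d+1)/(k+1)\rceil$ vertices (essentially a thickening of $\partial\Delta^{k+1}$ with an additional structure to achieve purity in dimension $d$) and then inductively attach top-dimensional simplices, each meeting the current complex in an $m$-dimensional face, so as to propagate strong connectivity w.r.t.\ dimension $m$ while introducing as few new vertices as possible. A bookkeeping argument should show that the number of additional vertices needed is $\lceil m/k\rceil-\lceil(d+1)/(k+1)\rceil$, yielding a complex on $d+1+\lceil m/k\rceil$ vertices that satisfies every requirement of the theorem and witnesses tightness.
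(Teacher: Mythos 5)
Your reduction rests entirely on the claim that some $(d-m-1)$-face $\tau$ has a link that is pure $m$-dimensional, strongly connected in the ordinary sense, and has $H_k\ne 0$, and you do not prove it; the ``Mayer--Vietoris or spectral-sequence argument based on the cover by open stars'' is a direction, not an argument, and the observation that the hypothesis on $m$ is ``exactly the regime'' where the bound exceeds that of \Cref{thmPure} is a consistency check, not a mechanism. Worse, the structural half of the claim fails in general: strong connectivity w.r.t.\ dimension $m$ does not localize to links. Take two $d$-simplices glued along an $(m-1)$-face $F$ and a face $\tau\subset F$ with $d-m$ vertices (possible when $d-m\le m$); then $\operatorname{lk}(\tau)$ consists of two $m$-faces sharing only $2m-d$ vertices, which is strongly connected only when $m\ge d$. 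So even the ``routine transfer'' step needs a genuine selection argument for $\tau$, on top of the homological one, and neither is supplied. There is also a circularity problem in the context of this paper: \Cref{thmStr} is here obtained as the special case $m=d$ of \Cref{thmRel}, and your reduction for $m=d$ is vacuous ($\tau$ is the empty face), so you would still owe an independent proof of that base case. Finally, the tightness half (``a bookkeeping argument should show\dots'') is a sketch of a construction, not a construction; attaching $d$-faces to $MH_{d,k}$ to force strong connectivity w.r.t.\ dimension $m$ can kill the homology class, and you give no reason it can be done with exactly $\ceil{\frac{m}{k}}-\ceil{\frac{d+1}{k+1}}$ new vertices.

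For comparison, the paper's proof avoids links entirely. It takes a complex $C$ in the class that minimizes the face numbers dimension by dimension (a ``staggered dimensionwise minimal example''), chooses a growth process $\Delta^d=C_0\subset\cdots\subset C_n=C$ guaranteed by \Cref{vertBeforeAles}, and notes that minimality forces $H_k(C_{n-1})=0$. Writing $C=C_{n-1}\cup_D\Delta^d$, Mayer--Vietoris gives $H_{k-1}(D)\ne 0$, so \Cref{lem:Nerve} produces $k+1$ maximal faces $D_0,\dots,D_k$ of $D$ with empty intersection, where $D$ contains an $m$-vertex simplex $V$ with $V\subset D_0$. A pigeonhole argument then finds some $D_i$ with $|D_i\cap V|\le \frac{m(k-1)}{k}$, whence the corresponding maximal face of $C_{n-1}$ has at least $\frac{m}{k}$ vertices outside $D$, and these together with the $d+1$ vertices of the attached face give the bound. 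Tightness is an explicit construction (\Cref{exRel}). If you want to salvage your approach you would need to prove the link-selection claim from scratch, which looks at least as hard as the theorem itself.
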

The importance of strongly connected components w.r.t \di \ $m$ stems essentially from the fact that these components determine collapsability to \di\ $m-1$ (see \Cref{lemColapseComponents}).

Finally, in \Cref{secDiscuss} we relate these results to other work in the field.

\section{Preliminaries}\label{prelim}
\begin{notation}
We use the following conventions:
\begin{itemize}
    \item By \textbf{simplicial complex} (or simply \textbf{complex}), we mean both the combinatorial object called "abstract simplicial complex" and the CW complex resulting from its geometric realization. The distinction will not be relevant in this paper.
    \item By $H_k$ we mean simplicial $k$-homology with coefficients in any abelian group.
    \item For a simplicial complex $X$, $|X|$ denotes the number of vertices in $X$.
\end{itemize}
\end{notation}
\begin{definition}
    A simplicial complex is called \textbf{pure $d$-dimensional}  if any simplex in the complex is contained in a $d$ \di al face. Note that this implies that $d$ is the dimension of all maximal faces.
\end{definition}
\begin{definition}\label{defStrongC}
 For a simplicial complex $C$, define a relation $\sim$ on its $d$-dimensional faces by having $\sigma \sim \tau$ if they share a $d-1$-dimensional face. 
 This is obviously reflexive and symmetric. Complete this relation under transitivity and call it $\sim'$. A \textbf{strong connectivity component} or \textbf{strongly connected component} of $C$ is an equivalence class of $d$-dimensional faces of $C$ under this relation. 
 A pure-$d$-dimensional simplicial complex is called \textbf{Strongly Connected} if all of its $d$-\di al faces are in the same component.
 
 A simplicial complex where all maximal simplexes are of \di \ greater or equal to $d$ is called \textbf{Strongly Connected with respect to \di \ $d$} if its $d$-skeleton is strongly connected.
 Equivalently, a complex is strongly connected w.r.t \di \ $d$ if its maximal faces form a single component under the relation of sharing $d-1$ \di al faces.
\end{definition}

\begin{definition}\label{expansionOp}
    For a simplicial complex $X$ strongly connected w.r.t \di \ $d$, an \textbf{expansion operation} is the attachment $X\cup _A \Delta^m$ of a new face to $X$, such that $m\ge d$ and $A$ contains a $(d-1)$-face (thus the new complex is also strongly connected w.r.t  \di \ $d$).
\end{definition} 
In this paper, we will discuss strongly connected components as resulting from expansion operations. Expansion operations fit into a \textbf{growth process}, i.e a series of subcomplexes $A_0\subset A_1\subset ...A_k=A$, where $A_0$ is a simplex and $\dim(A_0)\ge d$, $A$ is any strongly connected $d$-complex,  $A_{i+1}$ has exactly one maximal face more than $A_i$ and all $\{A_i\}$ are strongly connected w.r.t \di \ $d$.

\begin{claim}[from the preprint \cite{kogan24}]\label{vertBeforeAles}
 For any $C$ strongly connected w.r.t \di\ $d$, there exists a growth process.
\end{claim}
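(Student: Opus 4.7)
My plan is to construct the growth process greedily by induction on the number $N$ of maximal faces of $C$: at each step I add exactly one maximal face of $C$, using the strong connectivity hypothesis to guarantee that such a face can always be chosen so that the intermediate complex remains strongly connected w.r.t.\ \di\ $d$.

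For the base case I would pick any maximal face $\sigma_1$ of $C$ and set $A_0$ to be the corresponding simplex; since $C$ has all maximal faces of \di\ $\ge d$, this $A_0$ is a simplex of \di\ $\ge d$, meeting the requirement on the seed. For the inductive step, suppose $A_i \subsetneq C$ has been constructed, strongly connected w.r.t.\ \di\ $d$, with its maximal faces forming a proper nonempty subset $S$ of the maximal faces of $C$. The strong connectivity of $C$ (reformulated in \Cref{defStrongC} as saying that the maximal faces of $C$ form a single equivalence class under the ``share a $(d-1)$-face'' relation) then forces the existence of $\sigma \in S$ and a maximal face $\tau$ of $C$ with $\tau \notin S$ such that $\sigma \cap \tau$ contains a $(d-1)$-face. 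Attaching the simplex $\tau$ to $A_i$ along $A_i \cap \tau$ produces $A_{i+1}$ as an expansion operation in the sense of \Cref{expansionOp}, since $\dim \tau \ge d$ and the gluing subcomplex $A_i \cap \tau$ contains the shared $(d-1)$-face.

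The one bookkeeping point I would be most careful about --- though I do not expect it to cause real trouble --- is verifying that the set of maximal faces of $A_{i+1}$ equals $S \cup \{\tau\}$, so that each expansion step indeed adds exactly one maximal face. This is forced by the fact that every face we add is maximal in $C$: no such face is contained in another, so attaching $\tau$ neither causes $\tau$ to be absorbed into some $\sigma \in S$ nor unmaximalizes any $\sigma \in S$. After $N-1$ expansion steps the subcomplex contains every maximal face of $C$ and therefore equals $C$, completing the growth process.
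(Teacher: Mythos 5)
Your proposal is correct and is essentially the paper's argument in disguise: the paper forms the adjacency graph on maximal faces (edges for pairs sharing a $(d-1)$-face), takes a spanning tree, and filters it one vertex at a time, which is exactly your greedy induction step of pulling in one new maximal face adjacent to the current subcomplex. Your extra bookkeeping remark --- that attaching a face which is maximal in $C$ adds exactly one maximal face to $A_i$ --- is a point the paper leaves implicit, and it is handled correctly.
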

\begin{proof}
Define a graph $G_C=(V_C,E_C)$, where $V_C$ is the set of maximal faces of $C$, and $(\mu,M)$ is an edge if and only if $\dim(\mu\cap M)\ge d-1$. 
    The strong connectivity w.r.t \di\ $d$ of $C$ is equivalent to $G_C$ being connected, and so $G_C$ contains a spanning tree.
    Any tree has a filtration $\{v\}=T_0\subset T_1\subset ...\subset T_m=T$, where we add 1 vertex and 1 edge at a time, keeping the tree connected, and such a filtration corresponds to the desired $\Delta^d=C_0\subset C_1\subset...\subset C_m=C$.

\end{proof}

\begin{example}\label{expDim2}
    
 The following are the expansion operations for the pure 2-dimensional case:
\begin{center}
\begin{tabular}{| c  | c |} 
 \hline
 Operation & Homotopical effect\\
 \hline
\begin{tikzpicture}[x=0.75pt,y=0.75pt,yscale=-1,xscale=1]

\draw  [fill={rgb, 255:red, 155; green, 155; blue, 155 }  ,fill opacity=1 ] (16.25,54.25) -- (56.25,34.25) -- (56.25,74.25) -- cycle ;
\draw  [color={rgb, 255:red, 208; green, 2; blue, 27 }  ,draw opacity=1 ][pattern=redstripes,pattern size=6pt,pattern thickness=0.75pt,pattern radius=0pt, pattern color={rgb, 255:red, 208; green, 2; blue, 27}] (96.25,54.25) -- (56.25,74.25) -- (56.25,34.25) -- cycle ;
\draw  [fill={rgb, 255:red, 208; green, 2; blue, 27 }  ,fill opacity=1 ] (92,54.25) .. controls (92,51.9) and (93.9,50) .. (96.25,50) .. controls (98.6,50) and (100.5,51.9) .. (100.5,54.25) .. controls (100.5,56.6) and (98.6,58.5) .. (96.25,58.5) .. controls (93.9,58.5) and (92,56.6) .. (92,54.25) -- cycle ;
\end{tikzpicture} &   

    None \\
 \hline 
\begin{tikzpicture}[x=0.75pt,y=0.75pt,yscale=-1,xscale=1]

\draw  [fill={rgb, 255:red, 155; green, 155; blue, 155 }  ,fill opacity=1 ] (16.25,54.25) -- (56.25,34.25) -- (56.25,74.25) -- cycle ;
\draw  [color={rgb, 255:red, 208; green, 2; blue, 27 }  ,draw opacity=1 ][pattern=redstripes,pattern size=6pt,pattern thickness=0.75pt,pattern radius=0pt, pattern color={rgb, 255:red, 208; green, 2; blue, 27}] (96.25,54.25) -- (56.25,74.25) -- (56.25,34.25) -- cycle ;
\draw  [fill={rgb, 255:red, 0; green, 0; blue, 0 }  ,fill opacity=1 ] (92,54.25) .. controls (92,51.9) and (93.9,50) .. (96.25,50) .. controls (98.6,50) and (100.5,51.9) .. (100.5,54.25) .. controls (100.5,56.6) and (98.6,58.5) .. (96.25,58.5) .. controls (93.9,58.5) and (92,56.6) .. (92,54.25) -- cycle ;
\end{tikzpicture} & Adds generator to $H^1$\\
 \hline 

\begin{tikzpicture}[x=0.75pt,y=0.75pt,yscale=-1,xscale=1,rotate=-90]
\draw[opacity=0] (67,127) -- (28,127);
\draw  [color={rgb, 255:red, 208; green, 2; blue, 27 }  ,draw opacity=1 ][pattern=redstripes,pattern size=6pt,pattern thickness=0.75pt,pattern radius=0pt, pattern color={rgb, 255:red, 208; green, 2; blue, 27}] (28.93,81.57) -- (63.57,61.57) -- (63.57,101.57) -- cycle ;
\draw  [fill={rgb, 255:red, 155; green, 155; blue, 155 }  ,fill opacity=1 ] (26.25,36.93) -- (63.57,61.57) -- (28.93,81.57) -- cycle ;
\draw  [fill={rgb, 255:red, 155; green, 155; blue, 155 }  ,fill opacity=1 ] (26.25,126.21) -- (28.93,81.57) -- (63.57,101.57) -- cycle ;
\end{tikzpicture} 
& 
    None
\\
 \hline 
 \begin{tikzpicture}[x=0.75pt,y=0.75pt,yscale=-0.75,xscale=0.75]

\draw  [color={rgb, 255:red, 208; green, 2; blue, 27 }  ,draw opacity=1 ][pattern=redstripes,pattern size=6pt,pattern thickness=0.75pt,pattern radius=0pt, pattern color={rgb, 255:red, 208; green, 2; blue, 27}] (28.93,81.57) -- (63.57,61.57) -- (63.57,101.57) -- cycle ;
\draw  [fill={rgb, 255:red, 155; green, 155; blue, 155 }  ,fill opacity=1 ] (26.25,36.93) -- (63.57,61.57) -- (28.93,81.57) -- cycle ;
\draw  [fill={rgb, 255:red, 155; green, 155; blue, 155 }  ,fill opacity=1 ] (26.25,126.21) -- (28.93,81.57) -- (63.57,101.57) -- cycle ;
\draw  [fill={rgb, 255:red, 155; green, 155; blue, 155 }  ,fill opacity=1 ] (103.57,81.57) -- (63.57,101.57) -- (63.57,61.57) -- cycle ;

\end{tikzpicture} & Reduces $H^1$ or increases $H^2$ \\
 \hline
\end{tabular}

\end{center}\par
The black and gray simplexes denote the complex before the expansion operation,  while the red patterned simplexes denote new faces. \par
The following is a growth process for $C=\partial(\Delta^3)$: 
$$\begin{tikzpicture}

\draw[fill=gray,fill opacity=0.4]  (0,0) 
  -- (2,0) 
  -- (2.2,1) 
  -- cycle;

\draw (0,0) node[left]{$A$}
(2,0) node[right]{$B$}
  (2.2,1) node[right]{$C$};
\end{tikzpicture}
\begin{tikzpicture}

\draw[fill=gray,fill opacity=0.4]  (0,0) 
  -- (2,0) 
  -- (2.2,1) 
  -- cycle;
\draw[pattern=redstripes,pattern size=6pt,pattern thickness=0.75pt,pattern radius=0pt, pattern color={rgb, 255:red, 208; green, 2; blue, 27}]  (0,0) 
  -- (1,1.6) 
  -- (2.2,1) 
  -- cycle;
\draw (0,0) node[left]{$A$}
(1,1.6) node[above]{$D$}
(2,0) node[right]{$B$}
  (2.2,1) node[right]{$C$};
\end{tikzpicture}
\begin{tikzpicture}

\draw[fill=gray,fill opacity=0.3]  (0,0) 
  -- (2,0) 
  -- (2.2,1) 
  -- cycle;
\draw[fill=gray,fill opacity=0.3]  (0,0) 
  -- (1,1.6) 
  -- (2.2,1) 
  -- cycle;
\draw (0,0) node[left]{$A$}
(1,1.6) node[above]{$D$}
(2,0) node[right]{$B$}
  (2.2,1) node[right]{$C$};
\draw[pattern=redstripes,pattern size=6pt,pattern thickness=0.75pt,pattern radius=0pt, pattern color={rgb, 255:red, 208; green, 2; blue, 27}]
(0,0) 
  -- (1,1.6) 
  -- (2,0)
  -- cycle;
\end{tikzpicture}
\begin{tikzpicture}
\draw[fill=gray,fill opacity=0.2]  (0,0) 
  -- (1,1.6) 
  -- (2,0)
  -- cycle;

\draw[fill=gray,fill opacity=0.3]  (0,0) 
  -- (2,0) 
  -- (2.2,1) 
  -- cycle;
\draw[fill=gray,fill opacity=0.3]  (0,0) 
  -- (1,1.6) 
  -- (2.2,1) 
  -- cycle;
\draw (0,0) node[left]{$A$}
(1,1.6) node[above]{$D$}
(2,0) node[right]{$B$}
  (2.2,1) node[right]{$C$};
\draw[pattern=redstripes,pattern size=6pt,pattern thickness=0.75pt,pattern radius=0pt, pattern color={rgb, 255:red, 208; green, 2; blue, 27}] (1,1.6) 
  -- (2,0) 
  -- (2.2,1) 
  -- cycle; 
\end{tikzpicture}
$$
So $C_0$ is the triangle $ABC$, and $C_1=C_0\cup _{AC} ACD$.
$C_2=C_1\cup _{AD\cup AB} ABD$ and finally $C=C_2\cup _{\partial(BCD)} BCD$.
 \end{example}
We now move on to some topological results.
\begin{theorem}[The Mayer–Vietoris sequence,{\cite[section 2.2]{AT}}]
    Let $X$ be a CW complex, and $A,B$ be subcomplexes such that $A\cup B=X$. Then there exists an exact sequence
    \[
\begin{tikzcd}
\cdots \arrow[r, ""] 
& H_n(A \cap B) \arrow[r, "i_* \oplus j_*"] 
& H_n(A) \oplus H_n(B) \arrow[r, "k_* - l_*"] 
& H_n(X) \arrow[r, "\partial"] 
& H_{n-1}(A \cap B) \arrow[r, ""] 
& \cdots
\end{tikzcd}
\]
where $i,j$ are respectively the inclusion maps from $A\cap B$ to $A,B$, and $k,l$ are respectively the inclusion maps from $A,B$ to $X$.
\end{theorem}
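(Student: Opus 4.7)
The Mayer--Vietoris sequence is a classical consequence of the zigzag (snake) lemma applied to a short exact sequence of chain complexes, and my plan is to construct that sequence and invoke the lemma; the CW-subcomplex hypothesis makes the construction essentially combinatorial.

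First I would observe that because $A, B \subseteq X$ are subcomplexes with $A \cup B = X$, every cell of $X$ is a cell of $A$ or a cell of $B$, and a cell belongs to $A \cap B$ precisely when it belongs to both. At the level of cellular (or simplicial) chain groups this yields the two identities $C_n(X) = C_n(A) + C_n(B)$ and $C_n(A) \cap C_n(B) = C_n(A \cap B)$ inside $C_n(X)$. From these I would read off the pointwise short exact sequence
$$ 0 \longrightarrow C_n(A \cap B) \xrightarrow{(i_\#,\, j_\#)} C_n(A) \oplus C_n(B) \xrightarrow{k_\# - l_\#} C_n(X) \longrightarrow 0, $$
where $i_\#, j_\#, k_\#, l_\#$ denote the chain maps induced by the respective inclusions; injectivity on the left and surjectivity on the right are immediate, exactness in the middle follows from the intersection identity, and since each of the four maps commutes with the boundary operator the display is in fact a short exact sequence of chain complexes.

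Applying the zigzag lemma then produces the long exact sequence on homology, with the connecting map $\partial$ given in the usual way: represent a class in $H_n(X)$ by a cycle $\alpha + \beta$ with $\alpha \in C_n(A)$ and $\beta \in C_n(B)$, observe that $\partial\alpha = -\partial\beta$ lies in $C_{n-1}(A \cap B)$, and send the class to the homology class of that chain. The only place one might expect trouble is the identity $C_n(X) = C_n(A) + C_n(B)$: in the general topological setting this requires a barycentric subdivision / excision argument, but in the CW-subcomplex setting assumed here it is automatic, so beyond routine diagram-chasing there is no substantive obstacle.
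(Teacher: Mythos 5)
The paper states this result as a classical theorem cited from Hatcher and gives no proof of its own, so there is nothing to compare against; your argument is the standard and correct one. The short exact sequence of simplicial/cellular chain complexes $0 \to C_n(A \cap B) \to C_n(A) \oplus C_n(B) \to C_n(X) \to 0$ is indeed exact for subcomplexes (no subdivision needed, as you note), and the zigzag lemma yields the long exact sequence with the connecting map you describe; the only point worth adding is that since $C_n(X)$ is free the sequence splits, so exactness survives tensoring with an arbitrary coefficient group, matching the paper's convention of homology with coefficients in any abelian group.
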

 In particular, we will use the following corollary:
\begin{corollary}
    In the above notation, assume in addition that $B$ is contractible.
If $H_n(A)=H_{n-1}(A)=0$, then $H_n(X)=H_{n-1}(A\cap B)$.
\end{corollary}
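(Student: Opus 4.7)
The plan is to extract this directly from the Mayer--Vietoris sequence stated above. I would isolate the five consecutive terms centered on $H_n(X)$, namely
\[
H_n(A\cap B) \to H_n(A)\oplus H_n(B) \to H_n(X) \xrightarrow{\partial} H_{n-1}(A\cap B) \to H_{n-1}(A)\oplus H_{n-1}(B),
\]
and then zero out groups using the hypotheses. Contractibility of $B$ gives $H_n(B) = H_{n-1}(B) = 0$, and the assumptions on $A$ give $H_n(A) = H_{n-1}(A) = 0$, so both direct sums collapse to $0$. Exactness of the resulting sequence $0 \to H_n(X) \xrightarrow{\partial} H_{n-1}(A\cap B) \to 0$ then yields the desired isomorphism, with the connecting homomorphism $\partial$ itself providing the explicit map.

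There is no real obstacle; the entire argument is one application of exactness. The only subtlety worth flagging is that in degree $n=1$ the term $H_0(B)$ is nonzero for contractible $B$ in unreduced homology, so the statement is implicitly being read with reduced homology (or under the restriction $n\ge 2$). This convention does not alter the structure of the proof, so I would simply note it in passing and proceed.
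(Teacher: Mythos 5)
Your proof is correct and is the standard exactness argument; the paper states this corollary without proof, so there is nothing to diverge from. Your side remark about needing reduced homology (or $n\ge 2$) to kill the $H_0(B)$ term is a legitimate observation and matches how the paper implicitly uses the corollary (e.g.\ deducing $H_{k-1}(D)\ne 0$ for $k=1$ only makes sense in reduced homology).
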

We also extensively use the following notion:
\begin{definition}
    For a covering of a topological space $X$ by subsets $\mathcal{U} =\crb{U_i}_{i\in I}$, the \textbf{nerve of $(X,\mathcal{U} )$} (denoted $N(\mathcal{U})$) is a simplicial complex with vertex-set $I$, where a set of vertices $\crb{i_1,...,i_n}$ comprise a simplex if and only if $\bigcap_{j=1}^n U_{i_j}\ne 0$. 
\end{definition}
\begin{theorem}[Leray's nerve theorem, {\cite[4G exercise 4]{AT}}]\label{thm:Leray}
    Let $X$ be a CW complex, and let $\mathcal{U} = \{ U_i \}_{i \in I}$ be a covering of $X$ by sub-CW complexes.
    If every finite intersection $U_{i_0} \cap \cdots \cap U_{i_k}$ is either empty or contractible, then the nerve $N(\mathcal{U})$ is homotopy equivalent to $X$.
\end{theorem}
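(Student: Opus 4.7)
The plan is to construct a Mayer--Vietoris blow-up space $M$ together with natural projections $p : M \to X$ and $q : M \to |N(\mathcal{U})|$, and to show that both are homotopy equivalences; composing them then yields the desired equivalence $X \simeq |N(\mathcal{U})|$.

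Explicitly, I would set
$$M \;=\; \left(\coprod_{\sigma \in N(\mathcal{U})} U_\sigma \times \Delta^\sigma\right)\Big/\sim,$$
where $\sigma$ ranges over the simplices of $N(\mathcal{U})$, $\Delta^\sigma$ is a geometric simplex with vertex set $\sigma$, $U_\sigma := \bigcap_{i\in\sigma} U_i$, and for each face inclusion $\tau \subset \sigma$ the subset $U_\sigma \times \Delta^\tau$ of $U_\sigma \times \Delta^\sigma$ is identified with its image in $U_\tau \times \Delta^\tau$ via the inclusion $U_\sigma \hookrightarrow U_\tau$. Let $p$ forget the simplex coordinate and $q$ forget the space coordinate.

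For $p$, I would observe that each fiber $p^{-1}(x)$ is canonically $|N(\mathcal{U}_x)|$, where $\mathcal{U}_x = \{U_i : x \in U_i\}$; since any finite sub-family of $\mathcal{U}_x$ has the common point $x$, $N(\mathcal{U}_x)$ is a full simplex on its vertex set, hence contractible. An induction over the cells of $X$ (using that each $U_i$ is a subcomplex, so cell attachments can be made compatible with the cover) then shows that each stage of $M$ is built from the previous by attaching a contractible piece, so $p$ is a homotopy equivalence. For $q$, I would filter $|N(\mathcal{U})|$ by its skeleta and $M$ by the corresponding preimages: attaching a $k$-simplex $\sigma$ to $|N(\mathcal{U})|^{(k-1)}$ lifts, under $q$, to attaching $U_\sigma \times \Delta^\sigma$ along $U_\sigma \times \partial\Delta^\sigma$, and since $U_\sigma$ is contractible by hypothesis, this is homotopy-equivalent to attaching $\Delta^\sigma$ along $\partial\Delta^\sigma$; the gluing lemma for homotopy equivalences then inductively yields that $q$ is a homotopy equivalence.

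The main obstacle will be the bookkeeping for both inductions, particularly verifying the cofibration hypotheses of the gluing lemma at every stage and handling the case of infinite covers (which may require passing to a suitable colimit or reducing to the compact case via cellular approximation on $X$). The CW hypotheses on $X$ and on each $U_i$ are precisely what make all the relevant inclusions into cofibrations and guarantee that $M$ has the homotopy type of a CW complex, so these technicalities should work out cleanly.
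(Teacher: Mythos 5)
This theorem is not proved in the paper at all: it is quoted as a known result with a citation to Hatcher \cite[4G, exercise 4]{AT}, so there is no in-paper argument to compare against. Your proposal is the standard proof of the nerve theorem via the Mayer--Vietoris blow-up (the homotopy colimit of the diagram $\sigma\mapsto U_\sigma$), with the two projections $p$ and $q$; this is exactly the strategy Hatcher's Section 4G sets up, so you are reconstructing the intended proof rather than finding a new one. The $q$ half of your argument is solid: skeletal induction on $N(\mathcal{U})$ plus the gluing lemma, using contractibility of each $U_\sigma$, is complete as sketched. The genuinely delicate point is $p$, and your justification there is too thin: contractibility of the point-preimages $p^{-1}(x)\cong |N(\mathcal{U}_x)|$ does not by itself imply that $p$ is a homotopy equivalence (cell-like maps need not be homotopy equivalences without further hypotheses), and the phrase ``each stage of $M$ is built from the previous by attaching a contractible piece'' hides the real work. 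The correct version of that induction uses that, because each $U_i$ is a subcomplex, the family $\{i : x\in U_i\}$ is constant on each open cell $e^\circ$ of $X$ and only grows on $\partial e$; one then shows $p^{-1}(\overline{e})$ deformation retracts appropriately relative to $p^{-1}(\partial e)$, or invokes the projection lemma for cofibrant diagrams (this is where the hypothesis that the $U_i$ are sub-CW complexes, rather than arbitrary closed sets, is actually consumed). Note also that this half of the argument needs no contractibility of intersections at all --- that hypothesis is used only for $q$. With that step filled in, the proof is correct and is the standard one.
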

In particular, for a simplicial complex $X$, we will denote by $N_{max}(X)$ the nerve of $X$ w.r.t the cover by maximal faces. 
    This is a cover by simplexes, thus all intersections are either empty or contractible, and so we may apply Leray's nerve theorem to conclude $N_{max}(X)\overset{h}{\sim} X$.
\begin{lemma}\label{lem:Nerve}
    A complex $X$ that has non-trivial $H_k$ has at least $k+2$ maximal faces (and this is tight). Furthermore, for any maximal face $M$, there exist maximal faces $m_2,...,m_{k+2}$ such that $M\cap \bigcap_{i=2} ^{k+2}m_i=\emptyset$.
\end{lemma}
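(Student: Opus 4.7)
The plan is to transfer both statements to the nerve $N_{max}(X)$, which is homotopy equivalent to $X$ by Leray's theorem as noted just above the lemma. It therefore suffices to prove the following about an abstract simplicial complex $N$ with $H_k(N)\ne 0$: $N$ has at least $k+2$ vertices, and for every vertex $v\in V(N)$, some $(k+2)$-subset of $V(N)$ containing $v$ fails to span a simplex of $N$.

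For the vertex bound, if $N$ has at most $k+1$ vertices, then $N\subseteq \Delta^{k}$, so $N$ has at most one $k$-simplex and a direct check shows $\ker\partial_k=0$, whence $H_k(N)=0$. Tightness is witnessed by $\partial\Delta^{k+1}$, whose $k+2$ maximal faces are its $k$-faces.

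For the second assertion I would argue by contrapositive. Suppose every $(k+2)$-subset of $V(N)$ containing $v$ is a simplex of $N$. Set $V_0:=V(N)\setminus\{v\}$, let $L:=\mathrm{link}_N(v)$, and let $Z$ be the full subcomplex of $N$ on $V_0$. The hypothesis says every $(k+1)$-subset of $V_0$ belongs to $L$, so $L$ contains the full $k$-skeleton of $\Delta^{V_0}$. I would then apply Mayer--Vietoris to $N=\overline{\mathrm{st}}(v)\cup Z$, whose pieces intersect in $L$; using contractibility of the closed star, this extracts the exact segment
\[
H_k(L)\to H_k(Z)\to H_k(N)\to H_{k-1}(L).
\]

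The conclusion then rests on two observations. First, $H_{k-1}(L)=0$: since $L$ and $\Delta^{V_0}$ have identical $(k-2)$-, $(k-1)$-, and $k$-chain groups (both contain every subset of $V_0$ of size at most $k+1$), the computation of $H_{k-1}$ coincides and yields $H_{k-1}(L)=H_{k-1}(\Delta^{V_0})=0$, using $|V_0|\ge k+1$ from the first part. Second, $H_k(L)\to H_k(Z)$ is surjective, because any $k$-simplex of $Z$ is a $(k+1)$-subset of $V_0$ and so already belongs to $L$; hence every $k$-cycle of $Z$ is a cycle of $L$ representing the same class. These force $H_k(N)=0$, contradicting $H_k(X)=H_k(N)\ne 0$. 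The main obstacle is noticing that the hypothesis upgrades from ``$(k+1)$-subsets of $V_0$ lie in $L$'' to the full $k$-skeleton of $\Delta^{V_0}$ sitting inside $L$; once this is recognised, both the vanishing of $H_{k-1}(L)$ and the surjectivity of $H_k(L)\to H_k(Z)$ follow directly from chain-level considerations.
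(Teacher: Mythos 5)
Your proposal follows the paper's route: pass to $N_{max}(X)$ via Leray's theorem and show that a maximal face meeting every $(k+2)$-fold intersection would be a cone point over the $k$-skeleton of the nerve, forcing $H_k=0$; your Mayer--Vietoris computation on $N=\overline{\mathrm{st}}(v)\cup Z$ simply makes rigorous the step the paper leaves as an assertion. The only caveat is that for $k=1$ the claim $H_{k-1}(L)=0$ should be read in reduced homology (or replaced by the observation that $H_0(L)\to H_0(\overline{\mathrm{st}}(v))$ is injective since $L$ contains the full $1$-skeleton on $V_0$ and is therefore connected), which is consistent with how the lemma is applied elsewhere in the paper.
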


\begin{proof}
    By assumption, \Cref{thm:Leray} entails that $H_k(N_{max}(X))\ne 0$ as well.
    We conclude that $N_{max}(X)$ has at least $k+2$ vertices, and that $sk_{k+1}(N_{max}(X))$ is not full, i.e there exist $f_0,...,f_{k+1}$ vertices in $N_{max}(C)$ that do not span a simplex. 
    By the definition of the nerve, this is equivalent to $\bigcap_{i=0} ^{k+1}f_i=\emptyset$ as maximal faces in $X$.\par
    Moreover, for a maximal face $M$, if any collection of $k+2$ maximal faces containing $M$ intersect, then $M$ corresponds to a cone-point above the $k$ skeleton of $N_{max}(X)$ ($lk(M)\supset sk_k(N_{max}(X)\setminus M)$), meaning it can not have $H_k\ne 0$.
\end{proof}

\section{Pure Dimensional Case}\label{pure}
\begin{example}\label{expure}
    Consider  $\ceil{\frac{(d+1)(k+2)}{k+1}}$ vertices, divided into $k+3$ sets $s_0,...,s_{k+1},z$. For 
    $$r=\begin{cases}
        (d+1)\%\  (k+1), & k+1 \nmid d+1 \\
        k+1, & otherwise
        \end{cases}$$
    the sets $s_0,...,s_{r}$ contain $\ceil{\frac{d+1}{k+1}}$,  $s_{r+1},...,s_{k+1}$
    contain $\floor{\frac{d+1}{k+1}}$, and $z$ contains $1$ unless $k+1|d+1$, in which case it contains 0. By "$\%$" we mean division remainder. \par
    Construct the complex $MH_{d,k}$ to have $k+2$ maximal $d$-faces $f_0,...,f_{k+1}$, where $f_i=\bigcup_{j\ne i}s_j \bigcup_{j=i}^r z$, meaning we add $z\in f_i$ if and only if $i\le r$. 
    All $f_i$ have a total of $d+1$ vertices, thus
    $MH_{d,k}$ is a pure $d$-\di al complex with $N_{max}(MH_{d,k})=\partial(\Delta^{k+1})$, and so $H_k(MH_{d,k})\ne 0$. 
    Furthermore, if $r\ne k+1$:
    $$\ceil{\frac{(d+1)(k+2)}{k+1}} = \ceil{\sum_{i=1}^{k+2} \frac{d+1}{k+1}} = (k+2)\floor{\frac{d+1}{k+1}} + \ceil{(k+2)\crb{\frac{d+1}{k+1}}} =$$
$$ (k+2)\floor{\frac{d+1}{k+1}} + \ceil{(k+2)\frac{r}{k+1}} = (k+2)\floor{\frac{d+1}{k+1}} + \ceil{r + \frac{r}{k+1}} = (k+2)\floor{\frac{d+1}{k+1}} + r + 1$$
   where $\crb{a}$ denotes the fractional part of $a$.
    Thus $MH_{d,k}$ has the claimed number of vertices, coinsiding with the bound from \ref{thmPure}, proving tightness. The case where $r=k+1$ is obvious. 
\end{example}
\begin{figure}[ht]
    \centering
\begin{tikzpicture}
\draw (0,0) -- (1,1.6);
\draw (0,0) -- (2,0);
\draw (1,1.6) -- (2,0);
\end{tikzpicture}
\begin{tikzpicture}
\draw[fill=gray,fill opacity=0.4] (0,0) -- (1,1.6) -- (1,1.9) -- cycle;
\draw[fill=gray,fill opacity=0.4] (2,0) -- (1,1.6) -- (1,1.9) -- cycle;
\draw[fill=gray,fill opacity=0.4] (0,0) -- (2,0) -- (1,-0.5) -- cycle;
\end{tikzpicture}
\begin{tikzpicture}
\draw (0,0) -- (1,1.6);
\filldraw[black] (0,0) circle (1pt) node[anchor=east]{2};
\filldraw[black] (1,1.6) circle (1pt) node[anchor=west]{2};
\filldraw[black] (2,0) circle (1pt) node[anchor=west]{2};
\draw (0,0) -- (2,0);
\draw (1,1.6) -- (2,0);
\end{tikzpicture}
\begin{tikzpicture}
\draw (0,0) -- (1,1.6);
\draw (1,1.6) -- (2,0);
\draw[fill=gray,fill opacity=0.4] (0,0) -- (2,0) -- (1,-0.5) -- cycle;
\filldraw[black] (0,0) circle (1pt) node[anchor=east]{2};
\filldraw[black] (1,1.6) circle (1pt) node[anchor=west]{3};
\filldraw[black] (2,0) circle (1pt) node[anchor=west]{2};
\end{tikzpicture}
    \caption{The first examples for when $k=1$.}
    \label{fig:k=1}
\end{figure}

\begin{figure}[ht]
    \centering
\begin{tikzpicture}
\draw[fill=gray,fill opacity=0.2]  (0,0) 
  -- (1,1.6) 
  -- (2,0)
  -- cycle;

\draw[fill=gray,fill opacity=0.3]  (0,0) 
  -- (2,0) 
  -- (2.2,1) 
  -- cycle;
\draw[fill=gray,fill opacity=0.3]  (0,0) 
  -- (1,1.6) 
  -- (2.2,1) 
  -- cycle;
\draw (0,0) node[left]{}
(1,1.6) node[above]{}
(2,0) node[right]{}
  (2.2,1) node[right]{};
\draw[fill=gray,fill opacity=0.3] (1,1.6) 
  -- (2,0) 
  -- (2.2,1) 
  -- cycle; 
\end{tikzpicture}
\begin{tikzpicture}
\draw[fill=gray,fill opacity=0.2]  (0,0) 
  -- (1,1.6) 
  -- (2,0)
  -- cycle;

\draw[fill=gray,fill opacity=0.3]  (0,0) 
  -- (2,0) 
  -- (2.2,1) 
  -- cycle;
\draw[fill=gray,fill opacity=0.3]  (0,0) 
  -- (1,1.6) 
  -- (2.2,1) 
  -- cycle;
\draw (0,0) node[left]{}
(1,1.6) node[above]{2}
(2,0) node[right]{}
  (2.2,1) node[right]{};
\draw[fill=gray,fill opacity=0.3] (1,1.6) 
  -- (2,0) 
  -- (2.2,1) 
  -- cycle; 
\draw[fill=gray,fill opacity=0.4] (0,0) 
  -- (2,0) 
  -- (1.4,-0.3) 
  -- cycle; 
\draw (2.2,1) 
  -- (2,0) 
  -- (1.4,-0.3) 
  -- cycle; 
\end{tikzpicture}
\begin{tikzpicture}
\draw[fill=gray,fill opacity=0.2]  (0,0) 
  -- (1,1.6) 
  -- (2,0)
  -- cycle;
\draw[fill=gray,fill opacity=0.3]  (0,0) 
  -- (2,0) 
  -- (2.2,1) 
  -- cycle;
\draw[fill=gray,fill opacity=0.3]  (0,0) 
  -- (1,1.6) 
  -- (2.2,1) 
  -- cycle;
\draw (0,0) node[left]{2}
(1,1.6) node[above]{2}
(2,0) node[right]{}
  (2.2,1) node[right]{};
\draw[fill=gray,fill opacity=0.3] (1,1.6) 
  -- (2,0) 
  -- (2.2,1) 
  -- cycle; 
\draw[fill=gray,fill opacity=0.3] (1,1.6) 
  -- (2,0) 
  -- (3.2,-0.3) 
  -- cycle; 
  \draw[fill=gray,fill opacity=0.3] (1,1.6) 
  -- (2.2,1) 
  -- (3.2,-0.3) 
  -- cycle; 
  \draw[fill=gray,fill opacity=0.3] (0,0) 
  -- (2,0) 
  -- (3.2,-0.3) 
  -- cycle; 
  \draw[fill=gray,fill opacity=0.3] (0,0) 
  -- (2.2,1) 
  -- (3.2,-0.3) 
  -- cycle; 
\end{tikzpicture}
\begin{tikzpicture}
\draw[fill=gray,fill opacity=0.2]  (0,0) 
  -- (1,1.6) 
  -- (2,0)
  -- cycle;

\draw[fill=gray,fill opacity=0.3]  (0,0) 
  -- (2,0) 
  -- (2.2,1) 
  -- cycle;
\draw[fill=gray,fill opacity=0.3]  (0,0) 
  -- (1,1.6) 
  -- (2.2,1) 
  -- cycle;
\draw (0,0) node[left]{2}
(1,1.6) node[above]{2}
(2,0) node[right]{2}
  (2.2,1) node[right]{2};
\draw[fill=gray,fill opacity=0.3] (1,1.6) 
  -- (2,0) 
  -- (2.2,1) 
  -- cycle; 
\end{tikzpicture}

    \caption{The first examples for when $k=2$}
    \label{fig:k=2}
\end{figure}
\begin{remark}
    In \Cref{fig:k=1} and \Cref{fig:k=2}, a number $l$ next to a vertex indicates a collection of $l$ vertices which are "the same", in the sense that any maximal face containing one of them contains all of them. 
\end{remark}

\begin{proof}[Proof of \Cref{thmPure}]
    From \Cref{lem:Nerve} we conclude that there exist $f_0,...,f_{k+1}$ maximal faces of $C$ (which are $d$-\di al) such that $\bigcap_{i=0} ^{k+1}f_i=\emptyset$.\par
    Thus each vertex of $f_0,...,f_{k+1}$ in $C$ is common to at most $k+1$ of the $k+2$ faces, and by the pigeonhole principal, $\bigcup_{i=0} ^{k+1}f_i$ must have at least $\frac{(d+1)(k+2)}{k+1}$ vertices (each face has $d+1$ vertices).\par
    \Cref{expure} proves tightness.
\end{proof}

\section{Strongly Connected Case}
    \Cref{thmStr} follows from the more general case in \Cref{thmRel}.
     Because of \Cref{lemCupSteenConc}, the strongly connected pure \di al case is of special interest, so we construct examples addressing it in particular:
\begin{example} 
    Let $V=\{v_1,...,v_d\}$, and denote $d\%  k=r$ where $d\ge k$. Divide $V$ into disjoint sets $S_1,...,S_k$, where 
    $$|S_i|=\begin{cases}
			\ceil{\frac{d}{k}}, & \text{if $i \le r$}\\
            \floor{\frac{d}{k}}, & \text{otherwise}
		 \end{cases}$$
    and let $W=\{w_1,...,w_{\ceil{\frac{d}{k}}+1}\}$ be disjoint from $V$.
    Construct a simplicial complex $MS_{d,k}'$, where $V$ is a face, as well as 
    all sets of the form $W\cup \bigcup_{i\ne j}S_i$ (for any $j$). 
    The induced subcomplex on $w_1,v_1,...,v_d$ is homotopic to $S^{k-1}$, as the maximal faces of this subcomplex are $\crb{w_1}\bigcup_{i\ne j}S_i$, and therefore the nerve is exactly $\partial(\Delta^k)$.\par
    Define $MS_{d,k}''$ by adding 
    the simplex on all vertices of $V\cup W\setminus \{w_1\}$.
    $MS_{d,k}''$ has a strongly connected $d$-skeleton, as the complete $d$-skeletons on $V\cup W\setminus \{w_1\}$ and $W\cup \bigcup_{i\ne j}S_i$ are obviously strongly connected, and $(W\setminus \{w_1\})\cup \bigcup_{i\ne j} S_i$-- the shared vertices among them-- have at least $d$ vertices in all cases. 
    Furthermore, $w_2,...,w_{\ceil{\frac{d}{k}}+1}$ are cone points, and thus the complex so far constructed is contractible.\par
    Finally, define $MS_{d,k}$ by attaching $(v_1,...,v_d,w_1)$ to $MS_{d,k}''$. The attachment is along $\partial(v_1,...,v_d,w_1)\subset MS_{d,k}''$, so by Mayer-Vietoris $H_k(MS_{d,k})\ne 0$.
    The map $i^*:H_k(sk_d(MS_{d,k}))\to H_k(MS_{d,k})$ is surjective since $d\ge k$, so $H_k(sk_d(MS_{d,k}))\ne 0$ as well.
    This complex has $|V|+|W|=d+\ceil{\frac{d}{k}}+1=\ceil{\frac{d(k+1)}{k}}+1$ vertices, proving tightness in \Cref{thmStr}.
\end{example}
    The above example is an explicit construction. It is also possible to provide an example motivating the similarity between the bounds of \Cref{thmPure} and \Cref{thmStr}:
\begin{example}\label{exKogSus}
    Let $X'$ be a vertex-minimal example of a pure $(d-1)$-\di al complex with $H_{k-1}(X')\ne 0$ (such as the one in \Cref{expure}), i.e $|X'|=\ceil{\frac{d(k+1)}{k}}$.
    Construct a new complex $CX'$ by adding a new vertex $v$, and for each $(d-1)$-face $f_i$ of $X'$, add a $d$-face $f_i\cup \{v\}$. 
    This complex is homeomorphic to $C(X')$- the cone above $X'$. Define $X''$ by attaching to $CX'$ a simplex along all vertices of $X'$, i.e $X''=\Delta^{|X'|-1}\cup_{X'} \mbox{st}(v)$ ($\mbox{st}(v)$ being the union of all faces containig $v$). $X''$ is easily seen to be homotopic to $\Sigma X'$.\par
    $H_k(\Sigma X')\ne 0$, and so $H_k(sk_d(X''))\ne 0$ as well ($k$ homology only depends on faces of \di \ up to $k+1$, so if $d>k$, taking the skeleton will not affect $H_k$, and if $k=d$ the number of generators may only increase). 
    $sk_d(X'')$ is also seen to be strongly connected as all $d$-faces are either in $sk_d(\Delta^{|X|-1})$ or attached to it along $d$ vertices.
\end{example}
The construction above first appears in \cite{kogan24}.

\section{Relative Strong Connectivity}\label{secRel}
\begin{claim}
    The complex $MH_{d,k}$ from \Cref{expure} is strongly connected w.r.t \di \ $d-\ceil{\frac{d-k}{k+1} }=d+1 -\ceil{\frac{d+1}{k+1}}=\floor{\frac{(d+1)k}{k+1}}$.
\end{claim}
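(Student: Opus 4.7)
The plan is to show directly that any two of the $k+2$ maximal faces of $MH_{d,k}$ share at least $m := \lfloor (d+1)k/(k+1)\rfloor$ vertices; since the intersection of two maximal faces is itself a face of $MH_{d,k}$ of dimension at least $m-1$, the max-face adjacency graph (with respect to sharing $(m-1)$-faces) is then complete, and strong connectivity with respect to dimension $m$ follows immediately from \Cref{defStrongC}. The equivalence of the three expressions $d-\lceil (d-k)/(k+1)\rceil$, $d+1-\lceil (d+1)/(k+1)\rceil$, and $\lfloor (d+1)k/(k+1)\rfloor$ in the statement is a routine manipulation of floors and ceilings (using $\lfloor x/(k+1)\rfloor = x - \lceil x \cdot k/(k+1)\rceil$ or equivalent tricks), which I would record at the start.

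The main computation is a single application of inclusion--exclusion. Recall that $f_i = \bigcup_{j\ne i} s_j$, together with $z$ iff $i\le r$. Because the $s_j$ partition the non-$z$ vertices, the union $f_i\cup f_j$ contains every set $s_l$ (at most one of $f_i,f_j$ can omit any given $s_l$), and it contains $z$ iff at least one of $i,j$ is $\le r$. Writing $T = \lceil (d+1)(k+2)/(k+1)\rceil$ for the total vertex count of $MH_{d,k}$, this yields the uniform bound $|f_i\cup f_j|\le T$, and therefore
\[
|f_i\cap f_j| \;=\; 2(d+1) - |f_i\cup f_j| \;\ge\; 2(d+1) - T.
\]

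Writing $d+1=(k+1)q+r$ with $0\le r\le k$, a brief split on whether $r=0$ or $r>0$ (which also controls whether $z$ is present at all) evaluates the right-hand side to $kq + r - [r>0]$, which is exactly $\lfloor (d+1)k/(k+1)\rfloor = m$. The only obstacle, such as it is, is careful bookkeeping with the floor/ceiling arithmetic and the role of $z$ in $f_i\cup f_j$; no structural or topological argument beyond this inclusion--exclusion is needed, since pairwise $(m-1)$-adjacency of all $k+2$ maximal faces is considerably stronger than the connectivity demanded by strong connectivity with respect to dimension $m$.
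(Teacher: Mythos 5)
Your proof is correct and takes essentially the same route as the paper: both arguments establish strong connectivity by directly counting the vertices shared by each pair of the $k+2$ maximal faces and checking this is at least $\floor{\frac{(d+1)k}{k+1}}$. Your inclusion--exclusion via $|f_i\cap f_j| = 2(d+1) - |f_i\cup f_j| \ge 2(d+1) - T$ merely streamlines the paper's three-way case split on whether $i,j\le r$ into a single uniform bound.
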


\begin{proof}
    Recall that $MH_{d,k}$ is defined using vertex sets $s_0,...,s_{r}$ which contain $\ceil{\frac{d+1}{k+1}}$,  $s_{r+1},...,s_{k+1}$
    which contain $\floor{\frac{d+1}{k+1}}$, and $z$ which contains $1$ unless $k+1|d+1$ (where it is empty).
    $MH_{d,k}$ has $k+2$ maximal faces $f_0,...,f_{k+1}$, where $f_i=\bigcup_{j\ne i}s_j \bigcup_{j=i}^r z$.
    
    In the case where $k+1|d+1$, each pair of maximal faces share $\frac{(d+1)(k+2)}{k+1}-2\frac{d+1}{k+1}=\frac{(d+1)k}{k+1}$ vertices, and thus also strongly connected w.r.t this \di .
    Otherwise, for the maximal faces $f_i,f_j$, we divide into cases: 
    \begin{itemize}
        \item 
    If Both $i,j\le r$, 
    $$|f_i\cap f_j|=\ceil{\frac{(d+1)(k+2)}{k+1}}-|s_i|-|s_j|=\ceil{d+1+\frac{d+1}{k+1}}-2\ceil{\frac{d+1}{k+1}}=d+1+ \ceil{ \frac{d+1}{k+1}}-2\ceil{\frac{d+1}{k+1}}=$$
    $$d+1 -\ceil{\frac{d+1}{k+1}}=d+1 -\ceil{\frac{d+1+k-k}{k+1}}=d+1 -\ceil{ 1+ \frac{d-k}{k+1}}= d-\ceil{\frac{d-k}{k+1}}$$
    \item If $i,j>r$:
    $$|f_i\cap f_j|=\ceil{\frac{(d+1)(k+2)}{k+1}}-|s_i|-|s_j|-|z|=\ceil{d+1+\frac{d+1}{k+1}}-2\floor{\frac{d+1}{k+1}}-1=$$
    $$d+1+\floor{ \frac{d+1}{k+1}}+1-2\floor{\frac{d+1}{k+1}}-1=d+1-\floor{\frac{d+1}{k+1}}\ge d+1 -\ceil{\frac{d+1}{k+1}}$$
    \item Finally, if $i\le r<j$ (the other case is symmetric):
    $$|f_i\cap f_j|=\ceil{ d+1+\frac{d+1}{k+1}}-|s_i|-|s_j|-|z|=d+1+\ceil{ \frac{d+1}{k+1}}-\ceil{\frac{d+1}{k+1}}-\floor{\frac{d+1}{k+1}}-1=$$
    $$d+1-\floor{\frac{d+1}{k+1}}-1=d+1 -\ceil{\frac{d+1}{k+1}}$$
    \end{itemize}

    We conclude that every pair of maximal faces share $d+1 -\ceil{\frac{d+1}{k+1}}$ vertices, and that faces $f_i,i\le r$ are not in the same strongly connected component w.r.t \di s larger than $d+1 -\ceil{\frac{d+1}{k+1}}$.
\end{proof}

Now we address the case where the complex is strongly connected w.r.t a \di \ greater than $\floor{\frac{(d+1)k}{k+1}}$:

\begin{example}\label{exRel}
    Let $(d,k,m)$ be as in \Cref{thmRel}. Let $V=\crb{v_1,...,v_m}$, $W=\crb{w_{m+1},...,w_{d+1}}$, $Q=\crb{q_{1},...,q_{\ceil{\frac{m}{k}}}}$ be disjoint sets. 
    Divide $V$ into $k$ disjoint sets $V_i,\ 1\le i\le k$, where $|V_i|\in \crb{\floor{\frac{m}{k}},\ceil{\frac{m}{k}}}$ (this is possible in only a single way, up to permutation). Note $|Q|=max_i |V_i|$.
    Define $C'$ to be a complex on $V\cup W\cup Q$ such that the faces are:
    \begin{itemize}
        \item All $d$-\di al faces of $V\cup Q$ containing $q_1$.
        \item All $d$-\di al faces of $(\bigcup_{i\ne j} V_i) \cup W\cup Q$ containing $q_1$, for all $j$.
    \end{itemize}
    $(\bigcup_{i\ne j} V_i)\cup Q$ is common to both $V\cup Q$ and $(\bigcup_{i\ne j} V_i) \cup W\cup Q$, and has either $m$ or $m+1$ vertices by definition. Thus the complex is strongly connected w.r.t \di \ $m$. 
    Furthermore, $q_1$ is common to all maximal faces, and so is a cone point of $C'$, which is therefore contractible.\par
    On the other hand, the induced subcomplex on $V\cup W$ has maximal faces $V,\{(\bigcup_{i\ne j} V_i) \cup W\}_{j=1} ^k$ whose (maximal) nerve is $\partial(\Delta^k)$.
    Define $C$ by attaching a $d$-face along $V\cup W$ to $C'$.
    By Mayer–Vietoris we conclude that $H_k(C)\ne 0$.
    $C$ is also strongly connected w.r.t \di \ $m$ as the final face was attached along $V$, which has $m$ vertices.
    $$|C|=d+1+\ceil{\frac{m}{k}},$$
    proving tightness in \Cref{thmRel}.
\end{example}

\begin{proof}[Proof of \Cref{thmRel}]
    For any $i\in\N\cup\crb{0}$, let $F_i$ be the function sending a simplicial complex to its number of $i$-faces.
    Denote by $\mathfrak{C}_{-1}$ the set of (isomorphism types of) finite pure $d$-\di al complexes strongly connected w.r.t \di \ $m$ and having non-trivial $H_k$.
    For any $i\in\N\cup\crb{0}$, define $\mathfrak{C}_{i}:=F_i ^{-1}\prs{\min(F_i(\mathfrak{C}_{i-1}))}\cap \mathfrak{C}_{i-1}$. In other words, $\mathfrak{C}_{i}$ is the subset of complexes in $\mathfrak{C}_{i-1}$ having the minimal number of $i$-faces.
    An element in "$\mathfrak{C}_{\infty}$" (in our case it is enough to consider $\mathfrak{C}_{d}$) is said to be a \textbf{staggered dimensionwise minimal example} for the properties defining $\mathfrak{C}_{-1}$.
    $\mathfrak{C}_{-1}$ is non empty (by \Cref{exRel}) and the image of any $F_i$ always has a minimum. Therefore, $\mathfrak{C}_{d}$ is non-empty, and a staggered dimensionwise minimal example $C$ exists.
    
    $C$ is strongly connected w.r.t \di \ $m$, so we may chose a growth process $\Delta^d=C_0\subset...\subset C_n=C$.
    $C_{n-1}$ has fewer $d$-faces than $C$, so by staggered dimensionwise minimality $H_k(C_{n-1})=0$.
    If $C=C_{n-1}\cup_D \Delta^d$, by Mayer–Vietoris $H_{k-1}(D)\ne 0$, and thus by \Cref{lem:Nerve} $D$ has at least $k+1$ maximal faces with empty intersection- $D_0,...,D_k$ coming from maximal faces $D_0 ',...,D_k '$ of $X$. 
    $D$ also contains $m$ vertices which comprise a simplex (by strong connectivity), which we denote by $V=\{v_1,...,v_m\}$. Denote the other vertices by $W=\{w_{m+1},...,w_{d+1}\}$. Applying \Cref{lem:Nerve} again we may assume WLOG that $V\subset D_0$.\par 
    If $|D_i\cap V|>\frac{m(k-1)}{k}$ for all $i$, by the pigeonhole principle there must be a vertex common to all $D_i$, contradicting our choice of $D_i$. 
    Therefore, $min_i|D_i\cap V|\le \frac{m(k-1)}{k}$ (assume WLOG that this minimum is obtained with $D_1$), and:
    $$|D_1 '\setminus D|\ge |D_1'|-|D_1\cap V|-|D\setminus V|\ge d+1-\frac{m(k-1)}{k}-(d+1-m)=\frac{m}{k}.$$
    Since $C$ contains both $D_1 '\setminus D$ and the $d$-face attached along $D$, and they are disjoint, this gives the desired bound. Notice that this recreates the results in \Cref{thmPure} and \Cref{thmStr} for appropriate choices of $m$.

    By \Cref{exRel} the bound is also tight.
\end{proof}
  
\section{Discussion}\label{secDiscuss}
The importance of strong connectivity to topology is seen in the following:

\begin{claim}\label{lemCupSteenConc}  (proved in \cite[Lemma 2.7]{kogan24})
  For a simplicial complex $X$: $H^d(X)$,  $Im (\vee : H^*(X)\times H^*(X)\to H^d(X))$  and steenrod operations landing in $H^d(X)$ are non-zero only if there exists a $d$-\di al strong-connectivity component on which they are non-zero. 
        If $X$ has $d$ as the maximal \di , this is an if and only if for $H^d(X)$.
\end{claim}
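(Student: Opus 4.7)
The plan is to establish a direct-sum decomposition $H^d(X) = \bigoplus_\alpha H^d_\alpha$ indexed by the $d$-dimensional strong-connectivity components $\{S_\alpha\}$, and then verify that the cup product and Steenrod operations respect this decomposition. The whole argument rests on two combinatorial facts about the relation $\sim'$: (i) every $(d-1)$-face of $X$ that is contained in some $d$-face lies under $d$-faces from only one component (any two such $d$-faces are directly related), and (ii) all $d$-subfaces of a common $(d+1)$-face (or any higher-dimensional face) belong to the same component, since any two such $d$-subfaces share a $(d-1)$-face of the enclosing simplex.

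Using these, I decompose at the cochain level. Write $C^d(X) = \bigoplus_\alpha C^d_\alpha$ with $C^d_\alpha$ supported on the $d$-faces of $S_\alpha$, and let $Y_\alpha$ be the subcomplex generated by $S_\alpha$. For $\omega = \sum_\alpha \omega_\alpha$ and a $(d+1)$-face $\sigma$, observation (ii) gives $(\delta\omega)(\sigma) = (\delta\omega_{\alpha_\sigma})(\sigma)$ for the unique component $\alpha_\sigma$ containing the $d$-subfaces of $\sigma$, so $Z^d(X) = \bigoplus_\alpha Z^d_\alpha$ with $Z^d_\alpha := Z^d(X)\cap C^d_\alpha$. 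For $\eta \in C^{d-1}(X)$, let $\eta_\alpha$ be the restriction of $\eta$ to the $(d-1)$-faces lying in $Y_\alpha$; observation (i) implies that a $d$-face $\sigma \notin S_\alpha$ has no $(d-1)$-subface in $Y_\alpha$, so $(\delta\eta_\alpha)(\sigma) = 0$, while $(\delta\eta_\alpha)(\sigma) = (\delta\eta)(\sigma)$ for $\sigma \in S_\alpha$. Thus $\delta\eta = \sum_\alpha \delta\eta_\alpha$ with $\delta\eta_\alpha \in C^d_\alpha$, giving $B^d(X) = \bigoplus_\alpha B^d_\alpha$ and hence $H^d(X) = \bigoplus_\alpha H^d_\alpha$ with $H^d_\alpha = Z^d_\alpha/B^d_\alpha$.

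For the cup product, the Alexander--Whitney formula makes $(u \smile v)(\sigma)$ on a $d$-face $\sigma$ depend only on the values of $u,v$ on subfaces of $\sigma$, which all lie in $Y_{\alpha_\sigma}$. Thus the projection of $[u]\vee[v]$ to $H^d_\alpha$ equals $[u|_{Y_\alpha}]\vee[v|_{Y_\alpha}]$, and non-triviality forces some component to carry a non-trivial cup product. The same local reasoning handles Steenrod operations via their cochain-level formulas. Since $Y_\alpha$ is pure $d$-dimensional we have $Z^d(Y_\alpha) = C^d_\alpha$ and $B^d(Y_\alpha) = B^d_\alpha$, so $H^d_\alpha$ embeds canonically into $H^d(Y_\alpha)$, giving the ``only if'' direction. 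Under the extra hypothesis $\dim X = d$, also $Z^d(X) = C^d(X)$ and so $Z^d_\alpha = C^d_\alpha$, upgrading the embedding to an equality $H^d_\alpha = H^d(Y_\alpha)$ and yielding the ``if and only if'' clause for $H^d(X)$.

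The main obstacle is to spot observation (ii) and to verify that, together with (i), it forces the coboundary to respect the decomposition at both $\delta^{d-1}$ and $\delta^d$. After that, the cochain-level formulas for the cup product and Steenrod operations transport the decomposition automatically, and the remainder is essentially bookkeeping. The only subtle point is explaining why the extra hypothesis $\dim X = d$ is needed in the converse direction: without it, $H^d(Y_\alpha)$ can be strictly larger than $H^d_\alpha$ because the cocycle condition inside $X$ (imposed by $(d+1)$-faces) can kill classes that survive in $Y_\alpha$.
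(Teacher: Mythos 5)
The paper does not actually prove this claim; it defers to \cite[Lemma 2.7]{kogan24}, so there is no in-paper argument to compare against. Judged on its own, your proof is correct and self-contained. The two combinatorial observations are exactly right: (i) any two $d$-faces sharing a $(d-1)$-face are directly $\sim$-related, so the $(d-1)$-faces of the subcomplexes $Y_\alpha$ are pairwise disjoint and $\delta^{d-1}$ splits; (ii) any two distinct $d$-subfaces of a $(d+1)$-simplex intersect in a $(d-1)$-face, so each $(d+1)$-face sees only one component and $\delta^{d}$ splits. Together these give $Z^d(X)=\bigoplus_\alpha Z^d_\alpha$, $B^d(X)=\bigoplus_\alpha B^d_\alpha$, hence $H^d(X)=\bigoplus_\alpha H^d_\alpha$ with $H^d_\alpha\hookrightarrow H^d(Y_\alpha)$, and equality when $\dim X=d$ since then every $d$-cochain is a cocycle; this cleanly explains both directions of the statement. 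Two minor points you could tighten: for the cup product and Steenrod operations you do not really need the cochain-level locality of Alexander--Whitney or the cup-$i$ products --- naturality of these operations under the inclusions $\iota_\alpha\colon Y_\alpha\hookrightarrow X$, combined with the fact that a class in $H^d(X)$ is nonzero iff some restriction $\iota_\alpha^*$ of it is nonzero (which is what your decomposition establishes), already yields the conclusion; and you should flag that coefficients must be taken in a ring for the cup product and in $\mathbb{F}_p$ for the Steenrod operations, whereas the rest of the argument works over any abelian group.
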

A corollary to \Cref{lemCupSteenConc} is that \Cref{thmStr} my be used as a lower bound for any simplicial complex where the image of the cup product or some steenrod operation in $H^d$ is non-trivial.\par
This should be compared to the bound in \cite[\textsection 16]{ArnouxMarin91}, which says that if a complex $X$ has non-trivial $\cup:H^k(X)\times H^{d-k}(X)\to H^d(X)$, then $|X|\ge d+1+max(k,d-k)+1$ (by having vertex disjoint $\Delta^d$ and $\Delta^{max(k,d-k)}$ as subcomplexes).
Of course, $d+1+max(k,d-k)+1$ is much bigger than $d+1+\frac{d}{k}$ in all cases except $k=1$.

\vspace{12pt}
The importance of relative strong connectivity is seen from its influence on collapsability:

\begin{claim}\label{lemColapseComponents}(to appear in a different article)
    A simplicial complex $C$ collapses onto \di \ $d$ if and only if all strong connectivity components of $C$ w.r.t \di\  $d+1$ collapse onto \di \ $d$.
\end{claim}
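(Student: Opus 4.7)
The plan is to realize each strong-connectivity component of $C$ with respect to dimension $d+1$ as a concrete subcomplex $K_i \subseteq C$: take the $(d+1)$-faces in the equivalence class, adjoin every higher-dimensional face of $C$ containing one of them, and close under subfaces. The key structural fact driving the whole argument is that every face of $C$ of dimension $\ge d+1$ lies in exactly one $K_i$: any two $(d+1)$-subfaces of a common face share a codimension-$1$ subface of it and are therefore in the same equivalence class. Similarly, every $d$-face $\tau$ that is a subface of some $(d+1)$-face has all of its $(d+1)$-cofaces lying in a single component, since any two such cofaces share $\tau$. Consequently, whenever $(\tau,\sigma)$ is a free pair with $\dim\sigma \ge d+1$, every coface of $\tau$ in $C$ belongs to the unique component $K_i$ containing $\sigma$, so ``$\tau$ is free in $K_i$'' and ``$\tau$ is free in $C$'' are the same condition.

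For the forward direction, I would take a collapse sequence $C=X_0\supset X_1\supset\cdots\supset X_N$ with $\dim X_N \le d$. Each elementary step removes a free pair $(\tau,\sigma)$ with $\dim\sigma \ge d+1$, which by the structural fact lives entirely inside a single $K_i$ and leaves the other components undisturbed. Restricting the sequence to each $K_i$ and discarding trivial steps yields an elementary collapse sequence taking $K_i$ to dimension $\le d$. For the reverse direction, assume each $K_i$ admits such a collapse sequence $S_i$, and concatenate $S_1,\ldots,S_r$ into a candidate sequence in $C$. When an elementary step of $S_i$ attempts to remove $(\tau,\sigma)$, the cofaces of $\tau$ in $C$ all lie in $K_i$, and the earlier sequences $S_1,\ldots,S_{i-1}$ touched only faces in other components; so the cofaces of $\tau$ in the current state of $C$ agree exactly with those in the current state of $K_i$, and freeness transfers. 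Once every $S_i$ has been run, every face of $C$ of dimension $\ge d+1$ has been removed (each belonging to some $K_i$), so the result has dimension $\le d$.

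The main obstacle, and the step most deserving of careful writing, is the clean verification that cofaces cannot straddle distinct components: that a $d$-face which is a subface of some $(d+1)$-face of $K_i$ has no $(d+1)$-coface outside $K_i$, and analogously at higher dimensions. This is essentially an unpacking of the transitive closure in \Cref{defStrongC}; once established, both directions reduce to bookkeeping about which faces of $C$ are affected at each elementary collapse. A minor technical nuisance worth flagging is the possibility of $d$-faces of $C$ that lie in no $(d+1)$-face at all — these are outside every $K_i$, but they survive untouched under any collapse to dimension $\le d$, so they cause no trouble in either direction.
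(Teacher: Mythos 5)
The paper does not actually prove this claim (it is explicitly deferred to a separate article), so there is no proof of record to compare against; I am evaluating your argument on its own terms. Your realization of the components as closed subcomplexes $K_i$ and your two structural facts are the right skeleton for a proof, up to one slip: two $(d+1)$-dimensional subfaces of a common face $\rho$ need \emph{not} share a $d$-face when $\dim\rho\ge d+3$ (two $(d+1)$-faces of a large simplex can even be vertex-disjoint). They do lie in the same component, but because inside $\rho$ one can pass from one to the other by exchanging a single vertex at a time, each exchange preserving a common $d$-face. With that repaired, the uniqueness of the component containing any face of dimension $\ge d+1$, and hence your forward direction, goes through as you describe: the steps with $\dim\sigma\ge d+1$ are exactly the steps that delete faces of dimension $\ge d+1$, each such step has $\dim\tau\ge d$ so all proper cofaces of $\tau$ live in one $K_i$, and the subsequence of such steps belonging to $K_i$ is a valid collapse of $K_i$ below dimension $d+1$.

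The genuine gap is in the reverse direction. Your transfer of freeness from $K_i$ to $C$ rests on two assertions: that all cofaces in $C$ of the free face $\tau$ of a step of $S_i$ lie in $K_i$, and that $S_1,\dots,S_{i-1}$ touch only faces disjoint from $K_i$. Both are justified by your structural facts only when $\dim\tau\ge d$ (equivalently $\dim\sigma\ge d+1$). A face of dimension $<d$ can lie in several $K_j$ simultaneously and can have cofaces outside every $K_j$, so an elementary collapse of $K_i$ at a low-dimensional free pair need not be an elementary collapse of $C$, and an earlier $S_j$ may already have deleted a low-dimensional face of $K_i$. Nothing in the hypothesis ``$K_i$ collapses onto dimension $d$'' guarantees that the witnessing sequence avoids such steps. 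You need, and should state and prove, the normalization lemma: if a complex collapses onto dimension $\le d$, it does so by a sequence all of whose free pairs $(\tau,\sigma)$ satisfy $\dim\sigma\ge d+1$. This is true, and follows from exactly the subsequence-extraction argument of your forward direction: a face of dimension $\ge d+1$ is only ever deleted by a step with $\dim\sigma\ge d+1$; for such a step $\dim\tau\ge d$, so every proper coface of $\tau$ has dimension $\ge d+1$ and is deleted only by such steps; hence the subsequence of these steps is itself a valid collapse removing all faces of dimension $\ge d+1$. Once each $S_i$ is normalized in this way, the $S_i$ delete pairwise disjoint sets of faces (every deleted face has dimension $\ge d$ and lies in a unique $K_i$), and your concatenation argument closes correctly. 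Without this lemma the ``if'' direction does not close.
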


Regarding the theorems in this paper-- while they do not take into account the whole Betti-vector, they do produce an upper bound on the minimal number of vertices needed:
\begin{example}\label{exMore}
    let $b=(b_1,b_2,...,b_k)$ be a list of non-negative integers, and $d\ge k$. Then there exists a pure $d$-\di al (or strongly connected w.r.t \di \ $d$) complex with a number of vertices behaving asymptotically (in $d,b_1,b_2,...,b_k$) as 
    $$\sum_{i=1}^k\sqrt[i+2]{b_i}\frac{(d+1)(i+2)}{i+1}\gamma_i -(d+1)(|I|-1),$$
    or in the strongly connected case as
    $$\sum_{i=1} ^k \sqrt[i+1]{b_i}\frac{(d+1)(i+1)}{i}\gamma_{i-1} -(d+1)(|I|-1).$$
    The constant $\frac{1}{e}\le\gamma_i\le 1$ stems from Stirling's approximation, and $I$ is the set of indexes for which $b_i\ne 0$. 
    These are obtained by repeating the constructions in \Cref{expure} (\Cref{exKogSus}) for a larger "basic cycle" of $sk_i(\Delta^n)$ ($sk_{i-1}(\Delta^n)$ and then suspending), with $n$ determined by $b_i$, and then gluing the resulting complexes along a $d$-face.
\end{example}
Compare \Cref{exMore} to \cite[Theorem 1.3]{bjornerkalai88} which states that a Betti-vector $b=(b_1,b_2,...,b_k)$ is realizable using $n+1$ vertices if and only if $b$ is the face vector of a Sperner family (anti-chain) of subsets of $[n]$. \par
 The Lubell–Yamamoto–Meshalkin inequality states that a Sperner family with a face-vector $(a_0,...,a_n)$ supported on $[n]$ must satisfy 
 $$\sum_{i=0} ^n a_i i!(n-i)!\le n!$$
 We thus see that $S=\sum_{i=1} ^n i!(n-i)!b_i\le n!$. Asymptotically approximating (following \cite[formula 67]{Borwein17}):
 $$n\approx\Gamma^{-1}(n!)=\theta\left (\frac{ln(n!/\sqrt{2\pi})}{W_0(\frac{1}{e}ln(\frac{n!}{\sqrt{2\pi}})}\right)$$
  $W_0$ being the (main branch of) Lambert W function. Substituting the approximation from \cite[4.11]{CGHJK96} we get:
  $$n\approx\Gamma^{-1}(n!)=\theta\left (\frac{ln(n!/\sqrt{2\pi})}{ln(\frac{1}{e}ln(\frac{n!}{\sqrt{2\pi}})-lnln(\frac{1}{e}ln(\frac{n!}{\sqrt{2\pi}})}\right)=\theta\left (\frac{ln(n!/\sqrt{2\pi})}{ln(\frac{1}{e}ln(\frac{n!}{\sqrt{2\pi}})}\right)= \Omega\left (\frac{ln(S/\sqrt{2\pi})}{ln(\frac{1}{e}ln(S)}\right)$$
  and we get $n=\Omega(\frac{ln(S)}{ln(ln(S))})$.
 While the LYM inequality is not tight in non-trivial cases, if $n=o(S^\epsilon),\epsilon>0$ for minimal examples, we may conclude that the requirements of purity of \di \ and strong connectivity seem to require an asymptotically greater number of vertices.
 
\printbibliography 
\end{document}